\theoremstyle{plain}
\newtheorem*{theorem*}{Theorem}
\newtheorem*{lemma*} {Lemma}
\newtheorem*{corollary*} {Corollary}
\newtheorem*{proposition*} {Proposition}
\newtheorem{theorem}{Theorem}[section]
\newtheorem{lemma}[theorem]{Lemma}
\newtheorem{proposition}[theorem]{Proposition}
\theoremstyle{remark}
\newtheorem*{remark}{Remark}
\newtheorem*{definition}{Definition}
\theoremstyle{definition}
\def\ug{{\mathcal{U}(\G)}}
\def\kg{\K(\G)}
\def\zg{\Z[\G]}
\def\ng{{\mathcal{N}(\G)}}
\def\eps{\epsilon}
\def\R{\Bbb{R}}
\def\K{\Bbb{K}}
\def\id{\mbox{id}}
\def\Z{\Bbb{Z}}
\def\C{\Bbb{C}}
\def\xc{X(\scrc)}
\def\tixc{\widetilde{X(\scrc)}}
\def\tibxc{\widetilde{\yc}}
 \def\yc{Y(\scrc)}
\def\N{\Bbb{N}}
\def\part{\partial}
\def\B{\Bbb{B}}
\def\BB{\mathcal{B}}
\def\bp{\begin{pmatrix}}
\def\sm{\setminus}
\def\ep{\end{pmatrix}}
\def\bn{\begin{enumerate}}
\def\en{\end{enumerate}}
\def\ba{\begin{array}}
\def\ea{\end{array}}
\def\S{\Sigma}
\def\a{\alpha}
\def\b{\beta}
\def\ti{\tilde}
\def\fr12{\frac{1}{2}}
\def\im{\mbox{Im}}
\def\be{\begin{equation}}
\def\ee{\end{equation}}
\def\inter{\mbox{int}}
\def\K{\Bbb{K}}
\def\G{\Gamma}
\def\K{\Bbb{K}}
\def\v{\varphi}
\def\scrc{\mathcal{C}}
\def\scrd{\mathcal{D}}
\def\scra{\mathcal{A}}
\def\sing{\mbox{Sing}}
\def\cmtbf#1{} \def\cmt#1{}
\begin{document}

\title{$L^2$--Betti numbers of plane algebraic curves}
\author{Stefan Friedl, Constance Leidy and Laurentiu Maxim}
\address{\begin{tabular}{l} Universit\'e du Qu\'ebec \`a Montr\'eal, Montr\'eal, Qu\'ebec, Canada and\\ University of Warwick, Coventry, UK\end{tabular}}
\email{sfriedl@gmail.com}
\address{        Department of Mathematics and Computer Science, 655 Science Tower, Wesleyan Univ., 265 Church St., Middletown, CT 06459, USA. }
\email{cleidy@wesleyan.edu}
\address{\begin{tabular}{l}
Institute of Mathematics of the Romanian Academy, P.O. Box 1-764, \\
\quad 70700 Bucharest, Romania.\\
Department of Mathematics \& Computer Science, CUNY-Lehman College,\\
\quad 250 Bedford Park Blvd West, Bronx, NY 10468, USA
\end{tabular}}
\email{laurentiu.maxim@lehman.cuny.edu}

\def\subjclassname{\textup{2000} Mathematics Subject Classification}
\expandafter\let\csname subjclassname@1991\endcsname=\subjclassname \expandafter\let\csname
subjclassname@2000\endcsname=\subjclassname \subjclass{Primary  32S20; 32S55; Secondary 57M25; 14F17} \keywords{$L^2$--Betti numbers,
curve complements, singularities }
\date{\today}

\begin{abstract}
In \cite{DJL07} it was shown that if $\scra$ is an affine hyperplane arrangement in $\C^n$, then at most one of the $L^2$--Betti numbers $b_p^{(2)}(\C^n\sm  \scra,\id)$  is non--zero. We will prove an analogous statement
for complements of any algebraic curve in $\C^2$. Furthermore we also recast and extend results of \cite{LM06} in terms of $L^2$--Betti numbers.
\end{abstract}

\maketitle

\section{Introduction}
Let $X$ be any  topological space and $\varphi:\pi_1(X)\to \G$ a homomorphism to a  group (all groups are assumed countable).
Then for $p\in \N\cup \{0\}$ we can consider the $L^2$--Betti number $b_p^{(2)}(X,\varphi)\in [0,\infty]$.
We recall the definition and some of the most important properties of $L^2$--Betti numbers in Section \ref{section:l2betti}.


Let $\scrc\subset \C^2$ be a reduced plane algebraic curve with
irreducible components $\scrc_1,\dots,\scrc_r$.
We write $X(\scrc):=\C^2\sm \nu \scrc$, for $\nu \scrc$ a regular
neighborhood of $\scrc$ inside $\C^2$. We denote the meridians about
the nonsingular parts of $\scrc_1,\dots,\scrc_r$ by
$\mu_1,\dots,\mu_r$. Note that these meridians come with a preferred
orientation since the non-singular parts of the irreducible
components $\scrc_i$ are complex submanifolds of $\C^2$.

It is well--known (cf. Theorem \ref{theorem:top}) that $H_1(X(\scrc);\Z)$ is the free abelian group generated by the meridians $\mu_1,\dots,\mu_r$. Throughout the paper we denote by $\phi$ the map $\pi_1(X(\scrc);\Z)\to \Z$ given by sending each meridian $\mu_i$ to $1$. We also refer to $\phi$ as the total linking homomorphism.
We henceforth call a homomorphism $\a:\pi_1(\xc)\to \G$ to a group \emph{admissible} if the total linking homomorphism $\phi$ factors through $\a$.

Our first result is the following.
\begin{theorem} \label{mainthm}
Let $\scrc\subset \C^2$ be a reduced algebraic curve $\scrc$ whose
projective completion intersects the line at infinity transversely.
Let $\a:\pi_1(X(\scrc))\to \G$ be an admissible homomorphism, then
\[ b_p^{(2)}(X(\scrc),\a)= \left\{ \ba{rl} 0, & \mbox{ for }p\ne 2, \\ \chi(X(\scrc)),
&\mbox{ for }p=2. \ea \right.\]
\end{theorem}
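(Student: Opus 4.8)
The plan is to reduce the statement to a computation about ordinary Betti numbers and the topology of the curve complement $X(\scrc)$, using the standard properties of $L^2$--Betti numbers that are recalled in Section \ref{section:l2betti}. The key structural fact is that $X(\scrc)$ is homotopy equivalent to a finite CW--complex which, under the transversality-at-infinity hypothesis, has the homotopy type of a $2$--dimensional complex; indeed, by a Lefschetz-type argument (the affine curve complement is a Stein space of complex dimension $2$, hence homotopy equivalent to a CW--complex of real dimension $\le 2$). This immediately forces $b_p^{(2)}(X(\scrc),\a)=0$ for all $p\ge 3$, and $b_0^{(2)}(X(\scrc),\a)=0$ follows from the fact that $\G$ is infinite (the total linking homomorphism $\phi$ factors through $\a$, so the image of $\a$ surjects onto $\Z$ and is therefore infinite), which makes the zeroth $L^2$--Betti number vanish.

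The heart of the argument is to show that $b_1^{(2)}(X(\scrc),\a)=0$. First I would establish this for the \emph{universal} admissible homomorphism, namely the abelianization $\phi:\pi_1(X(\scrc))\to \Z$ itself, or more precisely I would like to leverage that any admissible $\a$ factors $\phi$. The natural tool is the Euler characteristic argument: since the alternating sum $\sum_p (-1)^p b_p^{(2)}(X(\scrc),\a)$ equals $\chi(X(\scrc))$ by the $L^2$--Euler characteristic formula (the $L^2$--Betti numbers compute the same Euler characteristic as the ordinary ones, a consequence of Atiyah's theorem / $\G$--equivariant Euler characteristic), once I know $b_0^{(2)}=0$ and $b_p^{(2)}=0$ for $p\ge 3$, I obtain
\[ -\,b_1^{(2)}(X(\scrc),\a) + b_2^{(2)}(X(\scrc),\a) = \chi(X(\scrc)). \]
Thus it suffices to prove the single vanishing statement $b_1^{(2)}(X(\scrc),\a)=0$, and then $b_2^{(2)}(X(\scrc),\a)=\chi(X(\scrc))$ follows for free.

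To kill $b_1^{(2)}$, I would exploit the meridional structure of $\pi_1(X(\scrc))$. The transversality hypothesis guarantees, by Zariski--van Kampen theory, that $\pi_1(X(\scrc))$ is normally generated by the meridians $\mu_1,\dots,\mu_r$, each of which maps to a generator of $\Z$ under $\phi$; in particular each meridian has infinite order image under any admissible $\a$. The idea is that a group normally generated by finitely many elements, all of which have infinite order in the target $\G$ (more precisely, whose images generate an infinite subgroup detecting $H_1$), has vanishing first $L^2$--Betti number of its low-dimensional classifying data. Concretely I would use a mapping-torus or fibration-type presentation: the complement of a plane curve transverse at infinity fibers (up to homotopy) over $\C^\ast$ via $\phi$ in a suitable sense after passing to the boundary structure, letting me apply the standard result that $L^2$--Betti numbers of an $S^1$-- or $\Z$--fibration vanish in the relevant degree.

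The step I expect to be the main obstacle is precisely this vanishing of $b_1^{(2)}(X(\scrc),\a)$, because it is where the admissibility hypothesis and the transversality-at-infinity hypothesis must both be used in an essential way — neither the dimension bound nor the Euler characteristic identity sees them. I anticipate the cleanest route is to reduce to the infinite cyclic cover: since $\phi$ factors through $\a$, by the induction/restriction properties of $L^2$--Betti numbers it should be enough to control the ordinary Betti numbers of the infinite cyclic cover of $X(\scrc)$ determined by $\phi$ (the Alexander/homological invariants of the curve), and then invoke that the transversality hypothesis forces the relevant Alexander module to be torsion with the right finiteness, making the associated $L^2$--Betti number vanish via the standard comparison between $L^2$--Betti numbers over $\G$ and the rank of homology over the Ore localization $\kk(t)$. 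Verifying that this torsion/finiteness condition holds under transversality at infinity, and that it correctly propagates from $\phi$ up to an arbitrary admissible $\a$, is the delicate point I would need to handle with care.
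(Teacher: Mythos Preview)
Your overall reduction is the same as the paper's: $X(\scrc)$ has the homotopy type of a finite $2$--complex, so $b_p^{(2)}=0$ for $p\ge 3$; admissibility forces $\im(\a)$ to be infinite, so $b_0^{(2)}=0$; and the Euler--characteristic identity reduces everything to showing $b_1^{(2)}(X(\scrc),\a)=0$.

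The gap is in your plan for $b_1^{(2)}=0$. Your preferred route---show that the Alexander module of the infinite cyclic cover is $\Z[t^{\pm 1}]$--torsion and then ``propagate from $\phi$ up to an arbitrary admissible $\a$''---only works when $\G$ is LITFA (or at least when one has an Ore localization $\K(\G)$ and the comparison of Proposition~\ref{prop:b2kg}). For a general group $\G$ there is no mechanism that takes $b_1^{(2)}(X(\scrc),\phi:\pi_1\to\Z)=0$ and upgrades it to $b_1^{(2)}(X(\scrc),\a:\pi_1\to\G)=0$; $L^2$--Betti numbers are not monotone under passage to larger quotients. The paper's Remark at the end explicitly notes that the LITFA/Ore approach does not seem to extend.

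The paper instead follows the fibration idea you mention only in passing, and makes it precise in a way that handles all admissible $\a$ simultaneously. The key move is to pass to the boundary $3$--manifold $\yc=\partial(\overline{\nu\scrc})$. Transversality at infinity is used (via a Lefschetz--hyperplane argument) to get that $\pi_1(\yc)\to\pi_1(X(\scrc))$ is surjective, whence Lemma~\ref{lem:b2surj} gives $b_1^{(2)}(\yc,\a)\ge b_1^{(2)}(X(\scrc),\a)$. One then shows that $(\yc,\phi)$ fibers over $S^1$: decompose $\yc$ into the link exteriors $X(L_i)$ around the singular points (which fiber by Milnor's theorem) and the circle bundles $F_j\times S^1$ over the smooth pieces of $\scrc$, and check that the fibrations match along the separating tori. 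L\"uck's theorem (\cite[Theorem~1.39]{Lu02}) then gives $b_p^{(2)}(\yc,\b)=0$ for \emph{every} homomorphism $\b$ through which the fibration class $\phi$ factors---this is exactly the admissibility hypothesis---so $b_1^{(2)}(\yc,\a)=0$ and hence $b_1^{(2)}(X(\scrc),\a)=0$. The point is that the fibered $3$--manifold $\yc$, not $X(\scrc)$ itself, is what carries the vanishing, and the $\pi_1$--surjectivity transfers it back.
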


In \cite{DJL07} it was shown that if $\scra$ is an affine hyperplane
arrangement in $\C^n$, then at most one of the $L^2$--Betti numbers
$b_p^{(2)}(\C^n\sm  \scra,\id)$  is non--zero. Theorem \ref{mainthm} can be
seen as  an analogous statement for the complement of an algebraic
curve in $\C^2$ which is in general position at infinity. Note that
in the case that $\G$ is a polytorsion--free--abelian (PTFA) group,
then this theorem, together with Proposition \ref{prop:b2kg},
recovers \cite[Corollary~4.2]{LM06}.

Given an algebraic curve $\scrc$ we denote by $\tixc$ the infinite
cyclic cover of $\xc$ corresponding to $\phi$. Given an admissible
homomorphism $\a:\pi_1(\xc)\to \G$ we let
$\ti{\G}:=\im\{\pi_1(\tixc)\to \pi_1(\xc)\xrightarrow{\a}\G\}$ and
we denote the induced map $\pi_1(\tixc)\to \ti{\G}$ by $\ti{\a}$.
We will now study the invariant
\[ b_1^{(2)}(\tixc,\ti{\a}:\pi_1(\tixc)\to \ti{\G}).\]
The idea of looking for invariants of the fundamental group of the
complement that capture information about the topology of the curve
goes back to the early work of Zariski, and was further developed by
A. Libgober by analogy with the classical knot theory (cf.
\cite{Lib82,Lib83,Lib92,Lib01}). 
In particular Libgober studied the ordinary one--variable Alexander polynomial corresponding to $\xc$,
its degree is given by the the ordinary Betti number of
$\tixc$ (cf. e.g. \cite[p.~368]{Co04}).  In that sense the study of the $L^2$--Betti numbers of $\tixc$
can be seen as a non--commutative generalization of the approach of Libgober. 

Following work of Cochran and Harvey the second and third author consider in
 \cite{LM06} the  the following homomorphism
\[ \pi_n: \pi_1(\xc)\to \pi_1(\xc)/\pi_1(\xc)_r^{(n+1)}=:\G_n,\]
where given a group $G$ we denote by $G_r^{(n)}$ the $n$--th term in the rational derived series
(cf. \cite{Ha05}). The group $\G_n$ is a PTFA group and the authors define an invariant
$\delta_n(\scrc)$ as the dimension of the first homology of $\tixc$ with coefficients in the skew field
associated to $\ti{\G}_n$. Some of these invariants are  computed in \cite{LM06} and \cite{LM07}. The main result of \cite{LM06} gives upper bounds on $\delta_n(\scrc)$ in terms of information coming from the singularities of $\scrc$.

We will see in Theorem \ref{thm:dimb2} that
\[ \delta_n(\scrc)=b_1^{(2)}(\tixc,\ti{\pi}_n:\pi_1(\tixc)\to \ti{\G}_n).\]
The following theorem can therefore be viewed as a generalization of \cite[Theorem~4.1]{LM06}.
Note that for the invariants $\delta_n(\scrc)$ it gives a slightly better bound than \cite[Theorem~4.1]{LM06}.

\begin{theorem} \label{mainthm2}
Let $\scrc\subset \C^2$ be a reduced plane algebraic curve of degree
$d$ whose projective completion intersects the line at infinity
transversely. Denote the set of singular points by $P_1,\dots,P_s$,
and for a singular point $P_i$ denote by $\mu(\scrc,P_i)$ the
associated Milnor number of the singularity germ at $P_i$.  Let
$\a:\pi_1(X(\scrc))\to \G$ be an admissible homomorphism, then
\[ b_1^{(2)}(\tixc,\ti{\a}:\pi_1(\tixc)\to \ti{\G})\leq \sum_{i=1}^s (\mu(\scrc,P_i)+n_i-1)+2g+d.\]
Here $n_i$ denotes the number of branches through $P_i$ and $g$ is the genus of the normalization of the projective completion of $\scrc$.
\end{theorem}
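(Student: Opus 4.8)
The plan is to bound $b_1^{(2)}(\tixc,\ti\a)$ by cutting $\xc$ into local pieces surrounding the singular points $P_1,\dots,P_s$ and a single global piece carrying the remaining topology of the curve, and then to estimate each contribution separately using the subadditivity and monotonicity of $L^2$--Betti numbers together with the vanishing provided by Theorem \ref{mainthm}. Concretely, I would fix disjoint closed Milnor balls $B_i$ about each $P_i$ and set $A_i:=B_i\cap\xc$, $B:=\xc\sm\bigcup_i\inter B_i$, and $C_i:=A_i\cap B$. Each $A_i$ is homotopy equivalent to the exterior $S^3\sm L_i$ of the link $L_i$ of the germ at $P_i$, where $L_i$ has $n_i$ components; each $C_i$ records the boundary gluing; and $B$ retracts onto the complement of the smooth locus of $\scrc$, whose topology is governed by the normalization of the projective completion.

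First I would lift this decomposition equivariantly to the infinite cyclic cover $\tixc$ and restrict the coefficient system to $\ti\G$. Applying the Mayer--Vietoris sequence in $\mathcal{N}(\ti\G)$--homology and the subadditivity of von Neumann dimension yields
\[ b_1^{(2)}(\tixc,\ti\a)\ \leq\ \sum_{i=1}^{s} b_1^{(2)}(\ti A_i)+b_1^{(2)}(\ti B)+\sum_{i=1}^{s} b_0^{(2)}(\ti C_i), \]
where each term denotes the $L^2$--Betti number of the lifted piece with its induced map to $\ti\G$. The zeroth terms vanish because each $C_i$ contains meridians, so $\phi$ (hence $\a$) has infinite image there; this is where the admissibility hypothesis enters, since it guarantees that the total linking map factors through $\a$ on every piece.

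The local analysis rests on the fact that, by admissibility, the restriction of the covering map to $A_i$ agrees with the total--linking map defining the Milnor fibration of the germ at $P_i$. Hence $\ti A_i$ is homotopy equivalent to the Milnor fiber $F_i$, a wedge of $\mu(\scrc,P_i)$ circles, giving $b_1^{(2)}(\ti A_i)\leq\mu(\scrc,P_i)$; the additional $n_i-1$ independent classes carried by the $n_i$ boundary tori of the link exterior account for the remaining part of the local contribution $\mu(\scrc,P_i)+n_i-1$. For the global piece $B$ I would identify the relevant homology in terms of the normalization of the projective completion, whose first Betti number is $2g$ and which meets the line at infinity in $d$ transverse points; combined with the vanishing statements of Theorem \ref{mainthm}, which eliminate the ambient contributions, this bounds $b_1^{(2)}(\ti B)$ by $2g+d$. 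Summing the local and global estimates produces the stated inequality.

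The main obstacle I anticipate is the local computation at the singular points: one must show that the $\mathcal{N}(\ti\G)$--dimension of $H_1(\ti A_i)$ is genuinely controlled by $\mu(\scrc,P_i)+n_i-1$ and not merely by the ordinary first Betti number of the link exterior, which requires a careful analysis of how the admissible homomorphism $\a$ interacts with the Milnor fibration and with the boundary data $C_i$, as well as the correct bookkeeping of the branch classes across the gluing. A secondary difficulty is justifying the Mayer--Vietoris estimate over the non--compact cover $\tixc$: one must verify that the homology modules in play have finite von Neumann dimension and that subadditivity applies, for which the vanishing results of Theorem \ref{mainthm} and the admissibility of $\a$ are again essential.
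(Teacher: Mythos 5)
Your overall strategy (decompose into Milnor-ball pieces plus a global piece, then use Mayer--Vietoris subadditivity) is genuinely different from the paper's, but as written it has a gap that the paper's route is specifically designed to avoid. The critical unproved step is the bound $b_1^{(2)}(\ti B)\leq 2g+d$ for the global piece. Your $B$ is a $4$--dimensional piece of the curve complement, and its infinite cyclic cover is not in any evident way controlled by the normalization of the projective completion; invoking Theorem \ref{mainthm} does not help, since that theorem concerns $\xc$ itself (not its infinite cyclic cover, and not the piece $B$), and it yields no statement about $H_1$ of $\ti B$ with $\mathcal{N}(\ti\G)$--coefficients. The paper sidesteps exactly this problem by first passing to the boundary $3$--manifold $\yc=\partial(\overline{\nu\scrc})$: general position at infinity gives, via the Lefschetz hyperplane theorem, that $\pi_1(\yc)\to\pi_1(\xc)$ is surjective, hence $b_1^{(2)}(\tibxc,\ti\a)\geq b_1^{(2)}(\tixc,\ti\a)$ by Lemma \ref{lem:b2surj}; then $(\yc,\phi)$ fibers over $S^1$ (Milnor fibrations at the singular points glued to product fibrations $F_i\times S^1$ over the smooth locus), so $\tibxc$ is homotopy equivalent to the compact fiber surface $\S$ and the entire bound reduces to $-\chi(\S)+1$, computed by an elementary Euler characteristic count. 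Note that in your proposal the hypothesis of transversality at infinity is never used in an essential way, whereas in the paper it is precisely what makes the reduction to $\yc$ possible; this is a sign that something is missing.

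Two secondary problems. First, your claim that $b_0^{(2)}(\ti C_i)=0$ because ``$C_i$ contains meridians, so $\phi$ has infinite image there'' confuses $C_i$ with its preimage $\ti C_i$ in the infinite cyclic cover: the meridians do not lift to loops in $\ti C_i$, and the relevant coefficient map is $\pi_1(\ti C_i)\to\ti\G$, which can perfectly well have finite (even trivial) image --- take $\G=\Z$ and $\a=\phi$, so that $\ti\G=1$. These terms therefore need not vanish. Second, the bookkeeping of the $n_i-1$ terms is not justified: if $\ti A_i$ is homotopy equivalent to the Milnor fiber then $b_1^{(2)}(\ti A_i)\leq\mu(\scrc,P_i)$ already, and the assertion that the ``additional $n_i-1$ independent classes carried by the boundary tori'' supply the rest of the local contribution is not an argument; in the paper these terms arise from the relation $\chi(\hat{\scrd})=\sum_i\chi(F_i)+\sum_i n_i+d$ between the pieces over the smooth locus and the normalization, not from the link exteriors. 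The decomposition idea could probably be made to work, but you would need to replace the global piece $B$ by something whose infinite cyclic cover you can actually compute --- which is, in effect, what the passage to the fibered boundary $\yc$ accomplishes.
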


This theorem shows that the topology of the singularities imposes
restrictions on the $L^2$--Betti numbers of the curve complement. In
this sense this result is in the same vein as the results of
Libgober \cite{Lib82} and Cogolludo--Florens \cite{CF07}, but see
also \cite{Lib94,DM07,Ma06} for similar results in the
higher-dimensional case.


\section{$L^2$--Betti numbers} \label{section:l2betti}

\subsection{The von Neumann algebra and its localizations}

Let $\G$ be a countable group. Define $l^2(\G):=\{ f:\G\to \C \, | \, \sum_{g\in \G}  |f(g)|^2<\infty \}$,
this is a Hilbert space. Then $\G$ acts on $l^2(\G)$ by right multiplication, i.e. $(g\cdot f)(h)=f(hg)$.
This defines an injective map $\C[\G]\to \BB(l^2(\G))$, where $\BB(l^2(\G))$ is the set of bounded operators on $l^2(\G)$.
We henceforth view $\C[\G]$ as a subset of $\BB(l^2(\G))$.

Now define the \emph{von Neumann algebra} $\ng$ to be the closure of $\C[\G]\subset \BB(l^2(\G))$ with respect to pointwise convergence in
$\BB(l^2(\G))$. Note that any $\ng$--module $M$ has a dimension $\dim_{\ng}(M)\in \R_{\geq 0}\cup \{\infty\}$. We refer to \cite[Definition~6.20]{Lu02} for details.

\subsection{The definition of $L^2$--Betti numbers}

Let $X$ be a topological space (not necessarily compact) and let
$\v:\pi_1(X)\to \G$ be a homomorphism to a group. Denote the
covering of $X$ corresponding to $\v$ by $\ti{X}$. Then we can study
the $\ng$--chain complex
\[ C_*^{sing}(\ti{X})\otimes_{\Z[\G]}\ng,\]
where $C_*^{sing}(\ti{X})$ is the singular chain complex of $\ti{X}$
with right $\G$--action given by covering
translation.  Furthermore $\G$
acts canonically on $\ng$ on the left. The $p$--th $L^2$--Betti
number is now defined as
\[ b_p^{(2)}(X,\v):=\dim_{\ng}(H_p(C_*^{sing}(\ti{X})\otimes_{\Z[\G]}\ng))\in [0,\infty].\]
We refer to \cite[Definition~6.50]{Lu02} for more details.

In the following lemma we summarize some of the properties of $L^2$--Betti numbers.
We refer to  \cite[Theorem~6.54,~Lemma~6.53~and~Theorem~1.35]{Lu02}  for the proofs.

\begin{lemma} \label{lem:propb2}
Let $X$ be a topological space  and let $\v:\pi_1(X)\to \G$ be a homomorphism to a group.
\bn
\item $b_p^{(2)}(X,\varphi)$ is a homotopy invariant of the pair $(X,\varphi)$.
\item $b_0^{(2)}(X,\varphi)=0$ if $\im(\v)$ is infinite and $b_0^{(2)}(X,\varphi)=\frac{1}{|Im(\v)|}$ if $\im(\v)$ is finite.
\item If $X$ is a finite CW--complex, then
\[ \sum_p \,(-1)^p \,b_p^{(2)}(X,\v)=\chi(X),\] where $\chi(X)$ denotes the Euler characteristic of $X$.
\item If $\im(\varphi)\subset \ti{\G}\subset \G$, then $b_p^{(2)}(X,\varphi:\pi_1(X)\to \ti{\G})=b_p^{(2)}(X,\varphi:\pi_1(X)\to \G)$.
\en
\end{lemma}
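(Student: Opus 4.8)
The plan is to derive all four items from the basic formal properties of the von Neumann dimension function $\dim_{\ng}$, which I treat as the fundamental input: it is defined on \emph{every} $\ng$--module, takes values in $[0,\infty]$, is invariant under $\ng$--isomorphism, satisfies $\dim_{\ng}(\ng)=1$, and is additive on short exact sequences of $\ng$--modules. Each statement is then a matter of transporting these properties through the construction $C_*^{sing}(\ti X)\otimes_{\Z[\G]}\ng$.

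For item (1) I would note that a homotopy equivalence $(X,\v)\to(Y,\psi)$ compatible with the maps to $\G$ lifts to a $\G$--equivariant homotopy equivalence of the associated covers, and hence induces a $\Z[\G]$--chain homotopy equivalence of singular chain complexes. Since $-\otimes_{\Z[\G]}\ng$ carries chain homotopies to chain homotopies, the resulting $\ng$--homology modules are isomorphic, and isomorphism--invariance of $\dim_{\ng}$ gives equality of all the $b_p^{(2)}$. For item (3) I would first invoke (1) to replace singular chains by the cellular chain complex, so that for a finite CW--complex $X$ the cellular $L^2$--chain complex is a finite complex $C_*$ of free $\ng$--modules with $\dim_{\ng}(C_p)$ equal to the number of $p$--cells of $X$. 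Feeding the short exact sequences $0\to Z_p\to C_p\to B_{p-1}\to 0$ and $0\to B_p\to Z_p\to H_p\to 0$ into additivity then yields $\sum_p(-1)^p\dim_{\ng}(H_p)=\sum_p(-1)^p\dim_{\ng}(C_p)$, whose right--hand side is exactly $\chi(X)$ because $\dim_{\ng}(\ng)=1$.

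For item (2) I would compute $H_0$ directly: right--exactness of $-\otimes_{\Z[\G]}\ng$ identifies $H_0$ of the $L^2$--chain complex with $\Z\otimes_{\Z[\pi_1(X)]}\ng$, the module of $\im(\v)$--coinvariants of $\ng$, and a standard computation gives its $\dim_{\ng}$ as the von Neumann trace $\tr$ of the orthogonal projection of $l^2(\G)$ onto the subspace of $\im(\v)$--invariant vectors. This projection vanishes when $\im(\v)$ is infinite, since $l^2(\G)$ then has no nonzero invariant vector, and equals averaging over the finite orbit, of trace $1/|\im(\v)|$, when $\im(\v)$ is finite. For item (4) the relevant input is the induction/restriction compatibility of the dimension function: when $\im(\v)\subset\ti\G\subset\G$ the cover $\ti X$ (which depends only on $\ker\v$) is unchanged, and the $\G$--coefficient complex is induced from the $\ti\G$--coefficient complex along $\mathcal{N}(\ti{\G})\hookrightarrow\ng$; since $\dim_{\ng}(\ng\otimes_{\mathcal{N}(\ti{\G})}M)=\dim_{\mathcal{N}(\ti{\G})}(M)$ for every $\mathcal{N}(\ti{\G})$--module $M$, the two families of $L^2$--Betti numbers agree term by term.

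The main obstacle is not any individual computation above but the foundational dimension theory on which all four silently rest: extending $\dim_{\ng}$ from finitely generated projective modules to arbitrary $\ng$--modules while retaining additivity and cofinality (this is essential in (2), where $H_0$ need not be finitely generated), and establishing the induction formula relating $\dim_{\mathcal{N}(\ti{\G})}$ to $\dim_{\ng}$ used in (4). These are exactly the results of L\"uck collected in the references cited after the statement, so in practice I would invoke them rather than reconstruct the theory from scratch.
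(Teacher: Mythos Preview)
The paper does not actually prove this lemma: it simply refers the reader to L\"uck's book (Theorem~6.54, Lemma~6.53 and Theorem~1.35 of \cite{Lu02}), and your sketch is a faithful outline of precisely those arguments, ending by invoking the same source for the underlying extended dimension theory. One small omission in your treatment of (4) is that passing from ``the chain complex is induced'' to ``the homology is induced'' requires flatness of $\ng$ over $\mathcal{N}(\ti\G)$, not just the dimension formula; but this too is in L\"uck (Theorem~6.29) and is covered by the blanket citation you make at the end.
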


We will also make use of the following lemma.

\begin{lemma}\label{lem:b2surj}
Let $f:Y\to Z$ be a map of topological spaces such that $\pi_1(Y)\to \pi_1(Z)$ is surjective.
Assume that we are given a homomorphism $\b:\pi_1(Z)\to \G$. Then
\[ b_1^{(2)}(Y,\pi_1(Y)\xrightarrow{f_*}\pi_1(Z)\xrightarrow{\b}\G) \geq b_1^{(2)}(Z,\b).\]
\end{lemma}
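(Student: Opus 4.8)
The plan is to compare $Y$ and $Z$ through the mapping cylinder of $f$ and to exploit the additivity and monotonicity of the von Neumann dimension. Write $\a:=\b\circ f_*\colon\pi_1(Y)\to\G$, so that the left--hand side of the asserted inequality equals $\dim_{\ng}H_1(C_*^{sing}(\ti{Y})\otimes_{\zg}\ng)$, where $\ti{Y}$ is the $\G$--cover of $Y$ determined by $\a$. Since $L^2$--Betti numbers are homotopy invariants of the pair (Lemma~\ref{lem:propb2}(1)), I may assume that $Y$ and $Z$ are connected CW--complexes and that $f$ is cellular. Let $W$ be the mapping cylinder of $f$; then $Z\hookrightarrow W$ is a homotopy equivalence and the inclusion $Y\hookrightarrow W$ induces $f_*$ on $\pi_1$. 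Because $f_*\colon\pi_1(Y)\to\pi_1(W)$ is surjective, one has $\im(\a)=\b(\im f_*)=\im(\b)$, so the $\G$--cover $\ti{Y}$ of $Y$ is canonically the pullback $f^*\ti{W}$ of the $\G$--cover $\ti{W}$ of $W$ associated to $\b$; in particular $\ti{f}\colon\ti{Y}\to\ti{W}$ is $\G$--equivariant and induces a $\zg$--chain map.

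The key geometric reduction is the following. Since $Y$ and $W$ are connected and $f_*\colon\pi_1(Y)\to\pi_1(W)$ is surjective, the CW--pair $(W,Y)$ is $1$--connected. By the standard cell--trading theorem, $(W,Y)$ is then homotopy equivalent rel $Y$ to a CW--pair all of whose relative cells have dimension $\geq 2$. I may therefore assume that the relative cellular chain complex satisfies $C_i(\ti{W},\ti{Y})=0$ for $i\leq 1$, these being free $\zg$--modules on the relative $\G$--cells.

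With this in hand the argument is formal. The short exact sequence of free (hence flat) $\zg$--modules $0\to C_*(\ti{Y})\to C_*(\ti{W})\to C_*(\ti{W},\ti{Y})\to 0$ stays exact after applying $-\otimes_{\zg}\ng$, and the resulting long exact sequence of $\ng$--homology, together with $C_i(\ti{W},\ti{Y})=0$ for $i\leq 1$, gives $H_1(C_*(\ti{W},\ti{Y})\otimes_{\zg}\ng)=0$. Hence the map $H_1(C_*(\ti{Y})\otimes_{\zg}\ng)\to H_1(C_*(\ti{W})\otimes_{\zg}\ng)$ is surjective. A surjection of $\ng$--modules cannot increase von Neumann dimension (by additivity of $\dim_{\ng}$ over short exact sequences), so $\dim_{\ng}H_1(\ti{Y};\ng)\geq\dim_{\ng}H_1(\ti{W};\ng)$. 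Since $W\simeq Z$, homotopy invariance identifies the right--hand side with $b_1^{(2)}(Z,\b)$, and the left--hand side is $b_1^{(2)}(Y,\a)$, which is exactly the claim.

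The main obstacle is the cell--trading step, and it is precisely there that the hypothesis of surjectivity of $f_*$ on $\pi_1$ enters: it is what makes $(W,Y)$ $1$--connected and hence forces the relative complex to vanish in degrees $\leq 1$. Everything afterward is formal, relying only on flatness of free $\zg$--modules to obtain the long exact sequence and on the additivity/monotonicity of $\dim_{\ng}$. A secondary point to treat carefully is the reduction to the CW setting (via homotopy invariance) and the verification that $\ti{Y}=f^*\ti{W}$, which rests on the equality $\im(\a)=\im(\b)$ noted above.
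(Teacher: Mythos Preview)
Your proof is correct and follows essentially the same strategy as the paper's: both use surjectivity of $f_*$ on $\pi_1$ to realize (a space homotopy equivalent to) $Z$ as $Y$ with only cells of dimension $\geq 2$ attached, conclude that the induced map $H_1(Y;\ng)\to H_1(Z;\ng)$ is surjective, and then invoke monotonicity of $\dim_{\ng}$ under surjections. The only difference is the choice of intermediate space---the paper passes through an Eilenberg--Maclane space $K(\pi_1(Z),1)$ built from $Y$ by attaching cells of dimension $\geq 2$, whereas you work directly with the mapping cylinder and apply cell-trading to the $1$-connected pair $(W,Y)$; your route is marginally more direct.
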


\begin{proof}
We denote the homomorphism
$\pi_1(Y)\xrightarrow{f_*}\pi_1(Z)\xrightarrow{\b}\G$ by $\b$ as
well. Note that an Eilenberg--Maclane space $K$ for $\pi_1(Z)$ is
given by adding handles of degree greater than 2 to $Z$. In
particular $b_1^{(2)}(Z,\b)=b_1^{(2)}(K,\b)$. By the homotopy
invariance of the $L^2$--Betti numbers we know that for any other
Eilenberg--Maclane space for $\pi_1(Z)$ we get the same invariant.

Since $f_*:\pi_1(Y)\to \pi_1(Z)$ is surjective we can also build an
Eilenberg--Maclane space $K'$ for $\pi_1(Z)$ by adding handles of
degree greater or equal than 2 to $Y$. By the above discussion we
therefore get
\[ b_1^{(2)}(Z,\b)=b_1^{(2)}(K,\b)=b_1^{(2)}(K',\b).\]

It now remains to show that $b_1^{(2)}(Y,\b)\geq  b_1^{(2)}(K',\b)$. Since $K'$ is given by adding handles of degree greater or equal than 2 to $Y$ we get the following commutative diagram
\[ \ba{cccccccccccccccccccc}
C_2(Y;\ng)&\to &C_1(Y;\ng)&\to&C_0(Y;\ng)&\to& 0\\[0.1cm]
\downarrow &&\downarrow =&&\downarrow = \\[0.1cm]
P\oplus C_2(Y;\ng)&\to &C_1(Y;\ng)&\to&C_0(Y;\ng)&\to& 0\\[0.1cm]
\parallel&&\parallel&&\parallel \\[0.1cm]
C_2(K';\ng)&\to &C_1(K';\ng)&\to&C_0(K';\ng)&\to& 0.\ea \]
where $P$ is the free $\ng$--module generated by the extra 2--handles of $K'$.
This shows that the map $H_1(Y;\ng)\to H_1(K';\ng)$ is surjective. But then the claim on $L^2$--Betti numbers follows immediately from \cite[Theorem~6.7]{Lu02}.
\end{proof}

\subsection{The $L^2$--Betti numbers and the Cochran--Harvey invariants}

Recall that a group $\G$ is called \emph{locally indicable} if for
every finitely generated non--trivial subgroup $H\subset \G$ there
exists an epimorphism $H\to \Z$. We will also need the notion of an
\emph{amenable} group. We refer to  \cite[p.~256]{Lu02} for the
definition of an amenable group, but note that any solvable group is
amenable and that  groups containing the free group on two generators are not amenable. In the following we refer to a locally indicable
torsion--free amenable group as a LITFA group.

Denote by $S$ the set of non--zero divisors
of the ring $\ng$. By \cite[Proposition~2.8]{Re98}
(see also \cite[Theorem~8.22]{Lu02}) the pair $(\ng,S)$
satisfies the right Ore condition.
We now let $\ug:=\ng S^{-1}$, this ring is called the
\emph{algebra of operators affiliated to $\ng$}. For any
$\ug$--module $M$ we also have a dimension $\dim_{\ug}(M)$. By
\cite[Theorem~8.31]{Lu02} we have
\[ b_p^{(2)}(X,\v)=\dim_{\ug}(H_p(C_*^{sing}(\ti{X})\otimes_{\Z[\G]}\ug)).\]
We collect some properties of LITFA groups in the following well--known theorem.

\begin{theorem}\label{thm:tfs}
Let $\G$ be a LITFA group.
\bn
\item All non--zero elements in $\Z[\G]$ are non--zero divisors in $\ng$.
\item  $\Z[\G]$  is an Ore domain and embeds in its classical right ring of
quotients $\K(\G)$.
\item $\kg$ is flat over $\Z[\G]$.
\item There exists a monomorphism $\kg\to \ug$ which makes the following diagram commute
\[ \xymatrix{\Z[\G] \ar[r] \ar[dr] & \kg\ar[d] \\ &\ug.}\]
\en
\end{theorem}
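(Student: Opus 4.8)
The plan is to isolate the two standing hypotheses and use each for a distinct purpose: local indicability will supply the ordering/domain structure, while amenability will supply both the Ore condition and the crucial injectivity of multiplication operators inside $\ng$. I would arrange the argument so that (1) and (4) come out together, since both follow once I know that every nonzero $x\in\Z[\G]$ becomes a \emph{unit} in $\ug$. I would also prove the statements out of numerical order: (2), then (3), then the analytic crux (1), and finally (4).

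First I would record the purely algebraic consequences of local indicability. By the Burns--Hale theorem a locally indicable group is left--orderable, and the group ring over $\Z$ of a left--orderable group has no zero divisors (Malcev--Neumann), so $\Z[\G]$ is a domain; note this also re-proves that $\G$ is torsion--free. Combining this with amenability, I would invoke the classical fact that a group ring of an amenable group which happens to be a domain satisfies the right (and, symmetrically, left) Ore condition; the standard proof is a F\o lner/growth argument showing $a\Z[\G]\cap b\Z[\G]\ne 0$ for nonzero $a,b$, since otherwise the internal direct sums $a\Z[\G]\oplus b\Z[\G]\oplus\cdots$ would grow faster than any F\o lner exhaustion permits. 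This yields (2): $\Z[\G]$ is an Ore domain embedding in its classical ring of right quotients $\kg$, which is a skew field. Statement (3) is then immediate, since the Ore localization $\kg=\Z[\G]T^{-1}$ with $T=\Z[\G]\sm\{0\}$ is flat over $\Z[\G]$, flatness being a general property of Ore localizations.

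The heart of the matter, and the step I expect to be the main obstacle, is (1): each nonzero $x\in\Z[\G]$ is a non--zero divisor in $\ng$. Since $\ng$ is a finite von Neumann algebra with faithful normal trace whose dimension function is faithful on closed Hilbert submodules of $l^2(\G)$, it suffices to show that multiplication by $x$ on $l^2(\G)$ has kernel of von Neumann dimension $0$; triviality of the kernel then follows, and as $\delta_e$ is a separating vector this forces $x$ to be a non--zero divisor in $\ng$, with the opposite side obtained by applying the involution $*$, which preserves $\Z[\G]$. To compute $\dim_{\ng}\ker$ I would use amenability and invoke the $L^2$--approximation theorem in its amenable form (Elek, Lück): this dimension equals the limit over a F\o lner exhaustion $\{F_i\}$ of the normalized kernel dimensions $\dim_{\C}\ker$ of the finite truncations of the multiplication operator. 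Here the left--ordering from the previous paragraph enters: the leading term of $x$ with respect to the order makes these truncations injective away from the boundary $\partial F_i$, whose relative size tends to $0$ by the F\o lner property, so the limit is $0$. This is the only genuinely analytic input, and the place where amenability and local indicability are simultaneously essential (as a sanity check, for $\G=\Z$ this is exactly the statement that a nonzero Laurent polynomial is nonzero a.e.\ on $S^1$, hence a non--zero divisor in $L^\infty(S^1)=\ng$).

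Finally I would deduce (4) and close the loop. Because $S$ is by definition the set of non--zero divisors of $\ng$, statement (1) says $x\in S$ for every nonzero $x\in\Z[\G]$, so the composite $\Z[\G]\to\ng\to\ug=\ng S^{-1}$ carries every nonzero element to a unit. By the universal property of the Ore localization $\kg$ this extends uniquely to a ring homomorphism $\kg\to\ug$ making the required triangle commute; since $\kg$ is a skew field and the map sends $1$ to $1$, its kernel is trivial and the map is a monomorphism. I would remark that essentially all of these ingredients are packaged in the cited sources, Lück \cite{Lu02} and Reich \cite{Re98}, so in practice the proof reduces to assembling them in this order, the amenable approximation step supplying the one substantial point.
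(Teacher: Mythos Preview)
Your argument is correct, and for parts (2), (3), (4) it is exactly what lies behind the paper's citations: Burns--Hale gives orderability hence a domain, amenability plus domain gives Ore (this is precisely \cite[Corollary~6.3]{DLMSY03}), Ore localization is flat, and the universal property of Ore localization together with $\Z[\G]\sm\{0\}\subset S$ gives the embedding $\kg\hookrightarrow\ug$. The one place you diverge is (1): the paper simply invokes Linnell \cite{Lin92}, whose argument works directly on $l^2(\G)$ using the ordering, whereas you route through the amenable $L^2$--approximation theorem to conclude $\dim_{\ng}\ker=0$ and then use faithfulness of the dimension function. Your route is a legitimate alternative (essentially Elek's proof of the zero--divisor conjecture for orderable amenable groups) and has the virtue of being self--contained once the approximation theorem is granted; Linnell's route is shorter and avoids the approximation machinery entirely. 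Either way the logical architecture---orderability from local indicability, Ore and analytic injectivity from amenability---is the same as the paper's.
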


\begin{proof}
The first claim follows from results of Linnell \cite{Lin92} and
Burns and Hale \cite{BH72}. Note that it implies in particular that
all non--zero elements in $\zg$ are non--zero divisors in $\zg$. The second
part now follows from  \cite[Corollary~6.3]{DLMSY03}. The third part
is a well--known property of Ore localizations (cf. e.g.
\cite[p.~99]{Re98}). Finally the last statement follows from the
definitions of $\kg$ and $\ug$ as Ore localizations and the fact
that $\Z[\G]\sm \{0\}\subset S$.
\end{proof}

We recall that
a group $\G$ is called poly--torsion--free--abelian (PTFA) if there exists a normal series
\[ 1=\G_0 \subset \G_1\subset \dots \subset \G_{n-1}\subset \G_n=\G \]
such that $\G_{i}/\G_{i-1}$ is torsion free abelian.
 PTFA groups played an important role in several recent
papers like \cite{COT03}, \cite{Co04}, \cite{Ha05} and \cite{LM06}.

It is easy to see that  PTFA groups are LITFA. Note that the quotients $\pi/\pi_r^{(n)}$ of a group by terms in the rational derived series are PTFA (cf. \cite{Ha05}).
The following proposition relates $L^2$--Betti numbers to ranks of modules over skew fields.
It seems to be well--known (cf. for example
\cite[p.~8]{Ha08}), but for the sake of completeness we quickly outline the
proof.

\begin{proposition} \label{prop:b2kg}
Let $\v:\pi_1(X)\to \G$ be a homomorphism to a
LITFA group $\G$. Then we have
\[  b_p^{(2)}(X,\varphi)=\dim_{\K(\G)}(H_p(X;\K(\G)). \]
\end{proposition}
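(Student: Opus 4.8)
The plan is to start from L\"uck's identification $b_p^{(2)}(X,\v)=\dim_{\ug}(H_p(C_*^{sing}(\ti{X})\otimes_{\Z[\G]}\ug))$ recorded in the excerpt (\cite[Theorem~8.31]{Lu02}), and to transport the computation from $\ug$ down to the skew field $\K(\G)$ using the commutative triangle $\Z[\G]\to\K(\G)\to\ug$ of Theorem~\ref{thm:tfs}. The singular chain complex $C_*^{sing}(\ti{X})$ is a complex of free right $\Z[\G]$-modules, so by associativity of the tensor product
\[ C_*^{sing}(\ti{X})\otimes_{\Z[\G]}\ug \cong \big(C_*^{sing}(\ti{X})\otimes_{\Z[\G]}\K(\G)\big)\otimes_{\K(\G)}\ug, \]
where $\ug$ is regarded as a $\K(\G)$-module via the monomorphism of Theorem~\ref{thm:tfs}(4). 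By definition $C_*^{sing}(\ti{X})\otimes_{\Z[\G]}\K(\G)$ is the complex computing $H_*(X;\K(\G))$.

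The key observation is that since $\K(\G)$ is a skew field, every $\K(\G)$-module is free, hence flat; in particular $\ug$ is flat over $\K(\G)$. Therefore tensoring the $\K(\G)$-chain complex with $\ug$ commutes with passage to homology, so that
\[ H_p\big(C_*^{sing}(\ti{X})\otimes_{\Z[\G]}\ug\big)\cong H_p(X;\K(\G))\otimes_{\K(\G)}\ug. \]

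It remains to match the two dimension functions. As $H_p(X;\K(\G))$ is a module over the skew field $\K(\G)$, it is free of rank $\dim_{\K(\G)}(H_p(X;\K(\G)))$ (allowing the value $\infty$), so its extension $H_p(X;\K(\G))\otimes_{\K(\G)}\ug$ is a free $\ug$-module of the same rank. Applying $\dim_{\ug}$, additivity over direct sums together with $\dim_{\ug}(\ug)=1$ gives $\dim_{\ug}(H_p(X;\K(\G))\otimes_{\K(\G)}\ug)=\dim_{\K(\G)}(H_p(X;\K(\G)))$. Chaining this with the two displays above and with L\"uck's formula for $b_p^{(2)}$ yields the proposition.

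The routine steps are the tensor-associativity bookkeeping and keeping track of the left/right module structures along $\Z[\G]\to\K(\G)\to\ug$. The one point deserving care is the comparison of the two dimension functions on possibly infinitely generated free modules: one must invoke the additivity and cofinality properties of L\"uck's dimension $\dim_{\ug}$ so that $\dim_{\ug}(\ug^{(I)})=|I|$ even when $I$ is infinite, matching $\dim_{\K(\G)}(\K(\G)^{(I)})=|I|$. This is where I expect the only genuine subtlety to lie; everything else is formal homological algebra, and the crucial input---flatness of $\ug$ over $\K(\G)$---comes for free because $\K(\G)$ is a division ring.
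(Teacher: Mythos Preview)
Your argument is correct and is essentially the same as the paper's: both proofs factor $\Z[\G]\to\ug$ through $\K(\G)$, use that every module over the skew field $\K(\G)$ is free to obtain flatness of $\ug$ over $\K(\G)$, and then match dimensions on the resulting free module. The paper is terser in handling the infinite-dimensional case (it simply says ``follows similarly''), whereas you spell out the need for additivity/cofinality of $\dim_{\ug}$; this is a fair point to flag but not a divergence in method.
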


\begin{proof}
By Theorem \ref{thm:tfs} we have an inclusion $\kg\to \ug$.
Since $\kg$ is a skew--field any $\kg$--module is free.
We deduce that $\ug$ is flat as a $\kg$--module.
 In particular if
$d=\dim_{\K(\G)}(H_p(X;\K(\G))<\infty$, then we see that
\[ \ba{rcl} \dim_{\ug}(H_p(X;\ug))&=&\dim_{\ug}(H_p(X;\kg)\otimes_{\kg}\ug)\\
&=&\dim_{\ug}(\kg^d\otimes_{\kg}\ug)\\
&=&
\dim_{\ug}(\ug^d)=d.\ea \]
The case that $d=\dim_{\K(\G)}(H_p(X;\K(\G))=\infty$ follows similarly.
\end{proof}

We now recall the definition of the Cochran--Harvey invariants (which in this context were first studied in \cite{LM06}).  Let $\scrc$ be an algebraic curve in $\C^2$.
Furthermore let
$\a:\pi_1(X(\scrc))\to \G$ be an admissible homomorphism to a LITFA group. Recall that admissible means that
 there exists a map $\phi:\G\to \Z$ such that the following diagram commutes
\[ \xymatrix{ \pi_1(\C^2\sm \scrc)\ar[rr]^{\a}\ar[dr]^\phi && \G\ar[dl]_{\phi}\\
 &\Z.&}\]
Also recall that we denote by $\ti{\G}$ the kernel of $\phi:\G\to \Z$ and that we denote the induced homomorphism $\pi_1(\tixc)\to \ti{\G}$ by $\ti{\a}$.

Now consider the homomorphism $\pi_1(\xc)\to
\pi_1(\xc)/\pi_1(\xc)_r^{(n+1)}=\G_n$. It is easy to see that this
homomorphism is admissible. As in \cite{LM06} we now define
\[ \delta_n(\scrc)=\dim_{\K(\ti{\G}_n)}(H_1(\tixc;\K(\ti{\G}_n)).\]
The following theorem, which is an immediate corollary to Proposition \ref{prop:b2kg},  now shows that the $L^2$--Betti numbers considered in this paper can be viewed as a generalization of the Cochran--Harvey invariants of plane algebraic curves.

\begin{theorem} \label{thm:dimb2}
Let $\scrc\subset \C^2$ be an algebraic curve $\scrc$ and let $\a:\pi_1(X(\scrc))\to \G$ be an admissible homomorphism to a LITFA group.
Then
\[ \dim_{\K(\ti{\G})}(H_1(\tixc;\K(\ti{\G}))=b_1^{(2)}(\tixc,\ti{\a}:\pi_1(\tixc)\to \ti{\G}).\]
\end{theorem}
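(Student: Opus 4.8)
The plan is to obtain the statement as a direct specialization of Proposition \ref{prop:b2kg}, applied to the space $\tixc$ together with the coefficient homomorphism $\ti{\a}\colon\pi_1(\tixc)\to\ti{\G}$ and with $p=1$. Granting that $\ti{\G}$ is a LITFA group, Proposition \ref{prop:b2kg} gives
\[ b_1^{(2)}(\tixc,\ti{\a})=\dim_{\K(\ti{\G})}(H_1(\tixc;\K(\ti{\G}))),\]
which is exactly the asserted equality.

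The one hypothesis that is not handed to us for free is that the target group $\ti{\G}$ is LITFA, and verifying this is the only real step. By construction $\ti{\G}=\im\{\pi_1(\tixc)\to\pi_1(\xc)\xrightarrow{\a}\G\}$ is a subgroup of $\G$, so I would argue that each of the three defining conditions of a LITFA group is inherited by subgroups. Torsion--freeness and amenability pass to arbitrary subgroups immediately. For local indicability, note that a finitely generated non--trivial subgroup $H\subset\ti{\G}$ is in particular a finitely generated non--trivial subgroup of $\G$, so the local indicability of $\G$ supplies an epimorphism $H\to\Z$. Hence $\ti{\G}$ is LITFA, and Theorem \ref{thm:tfs} then guarantees that the skew field $\K(\ti{\G})$ together with its embedding into $\mathcal{U}(\ti{\G})$ exist, so that the proof of Proposition \ref{prop:b2kg} runs verbatim with $\G$ replaced by $\ti{\G}$.

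After this, there is essentially nothing left: the homology group $H_1(\tixc;\K(\ti{\G}))$ on the left is formed from the very local coefficient system determined by $\ti{\a}$ that also defines the $L^2$--Betti number on the right, so the two sides are identified term by term. I do not expect any genuine obstacle; the only point worth isolating explicitly is the hereditary closure of the LITFA condition under passage to subgroups, which is precisely what licenses the application of Proposition \ref{prop:b2kg} to the (a priori non--canonical) subgroup $\ti{\G}\subset\G$ rather than to $\G$ itself.
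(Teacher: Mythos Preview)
Your proposal is correct and matches the paper's approach: the paper simply states that the theorem is ``an immediate corollary to Proposition~\ref{prop:b2kg}'' without writing out a separate proof. Your only additional contribution is the explicit verification that $\ti{\G}$, as a subgroup of the LITFA group $\G$, is itself LITFA---a routine detail the paper leaves implicit.
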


\section{Proof of Theorem \ref{mainthm} and Theorem \ref{mainthm2}}

\subsection{Plane algebraic curves and their topology} \label{section:defs}
From now on let $\scrc\subset \C^2$ be an algebraic curve with
irreducible components $\scrc_1,\dots,\scrc_r$.  Recall that we
write $\xc=\C^2\sm \nu \scrc$. We now also write
$\yc=\partial(\overline{\nu
\scrc})=\partial(\overline{X(\scrc)})$. Note that $\yc\subset
\xc$.
The following summarizes some well--known results on the topology of $\xc$.

\begin{theorem} \label{theorem:top}
\bn
\item $\pi_1(X(\scrc))$ is normally generated by the meridians about the non--singular parts of the irreducible components and $H_1(X(\scrc);\Z)$
is a free abelian group of rank $r$ with basis given by these
meridians.
\item $X(\scrc)$ is homotopy equivalent to a 2--complex.
\item If $\scrc$ intersects the line at infinity transversely, then $\pi_1(\yc)\to \pi_1(X(\scrc))$ is surjective.
\en
\end{theorem}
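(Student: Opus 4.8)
The plan is to establish the three assertions separately: each is classical, so the argument is largely a matter of assembling standard tools, with the real work concentrated in part (3), where the transversality hypothesis at infinity is essential.

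For part (1), I would deduce the normal generation by meridians from the Zariski--van Kampen theorem. Choosing a generic pencil of lines and putting loops in general position, every element of $\pi_1(\xc)$ is a product of conjugates of the meridians $\mu_1,\dots,\mu_r$, so these normally generate the group. Abelianizing, all the braid-monodromy relations become trivial, and since the nonsingular locus of each component $\scrc_i$ is connected, any two meridians about $\scrc_i$ are freely homotopic and hence equal in $H_1(\xc;\Z)$; thus $H_1(\xc;\Z)$ is generated by the $r$ classes $\mu_1,\dots,\mu_r$. Their independence is seen via linking numbers: since each $\scrc_i$ is a real $2$--cycle in $\C^2\cong\R^4$ and a loop is a $1$--cycle, the assignment $\gamma\mapsto \lk(\gamma,\scrc_i)$ is a well--defined homomorphism $H_1(\xc;\Z)\to\Z$ sending $\mu_j$ to $\delta_{ij}$; equivalently one invokes Alexander duality. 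Hence $H_1(\xc;\Z)\cong\Z^r$ with the stated basis.

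For part (2), I would use that $\xc$ is homotopy equivalent to the smooth affine variety $\C^2\sm\scrc$, which is a Stein manifold of complex dimension $2$. By the Andreotti--Frankel / affine Lefschetz theorem, a smooth affine variety of complex dimension $n$ has the homotopy type of a finite CW--complex of real dimension at most $n$; taking $n=2$ yields the claim.

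Part (3) is where I expect the main difficulty, and it is the only place transversality at infinity is used. The first step is to produce a compact model: the hypothesis that $\overline{\scrc}$ meets the line at infinity $L_\infty$ transversely gives a conical (product) structure of the complement near infinity, so that $\overline{\xc}=\C^2\sm\nu\scrc$ is diffeomorphic to the interior of a compact $4$--manifold $M$ with $\partial M=\yc$, and the inclusion $\yc\hookrightarrow\xc$ is modeled by $\partial M\hookrightarrow M$. The second step strengthens part (2): Morse theory on a proper strictly plurisubharmonic exhaustion of the affine surface $\C^2\sm\scrc$ gives a handle decomposition of $M$ (built from the empty set) using only handles of index $\leq 2$. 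Turning this decomposition upside down via Poincar\'e--Lefschetz duality, $M$ is obtained from a collar $\yc\times[0,1]$ by attaching handles of index $\geq 4-2=2$ only. Since attaching handles of index $\geq 2$ can only add relations to (or leave unchanged) the fundamental group and never creates new generators, the induced map $\pi_1(\yc)\to\pi_1(M)=\pi_1(\xc)$ is surjective. The delicate point --- and the main obstacle --- is the first step: justifying that transversality at infinity really does compactify $\overline{\xc}$ to a manifold with boundary exactly $\yc$, controlling the end at infinity so that no $\pi_1$ is lost there; this is where I would invest the most care, appealing to the structure of the link at infinity.
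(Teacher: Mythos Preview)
Your treatments of (1) and (2) are correct and essentially match the paper's brief proof, which invokes Lefschetz duality for $H_1$ and the affine Lefschetz/Morse theory for the $2$--complex structure.

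For (3), however, there is a genuine gap. The assertion that $\overline{X(\scrc)}$ compactifies to a manifold $M$ with $\partial M = Y(\scrc)$ is not merely delicate --- it is false, and cannot be the target of the ``care'' you propose to invest. Intersecting with a large ball $\B^4$ gives $M=\B^4\cap\overline{X(\scrc)}$ with
\[
\partial M \;=\; \big(Y(\scrc)\cap\B^4\big)\;\cup_{d\text{ tori}}\;\big(S^3\setminus\nu L_\infty\big),
\]
where $L_\infty=\scrc\cap S^3$ is the link at infinity. The piece $S^3\setminus\nu L_\infty$ is never empty and is not part of $Y(\scrc)$. Your dual--handle argument therefore yields only that $\pi_1(\partial M)\to\pi_1(M)$ is surjective; to reach the conclusion you must additionally show that $\pi_1\big(Y(\scrc)\big)\to\pi_1(\partial M)$ is surjective. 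This is where transversality at infinity genuinely enters: it forces $L_\infty$ to be a union of $d$ Hopf fibers, so $S^3\setminus\nu L_\infty\cong S^1\times\Sigma_{0,d}$ and its fundamental group is generated by the images of the boundary tori --- hence, via van Kampen, by the image of $\pi_1\big(Y(\scrc)\big)$. With this extra step the strategy succeeds, but as formulated your plan aims at an impossible intermediate statement.

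The paper's own proof of (3) takes a different route: it invokes the Lefschetz hyperplane theorem and refers to an analogous argument in \cite{LM06}, rather than a handle--dualization argument.
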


\begin{proof}
The first statement follows from the fact that by gluing in  disks
at the meridians we kill the fundamental group. The statement about
the first homology group follows from Lefschetz duality (cf.
\cite[p.~835]{Lib82} or \cite[p.~103]{Di92}). The second statement follows since $X(\scrc)$
has the homotopy type of a $2$-dimensional complex affine variety
(cf. also \cite[Theorem~1.6.8]{Di92} or \cite[Theorem~7.2]{Mi63}).
The last statement follows from applying the Lefschetz hyperplane
theorem (cf. e.g. \cite[p.~25]{Di92}), and by an argument similar to
that of \cite[Proof~of~Theorem~4.1]{LM06}.
\end{proof}

\subsection{Proof of Theorem \ref{mainthm}}
From now on assume that the algebraic curve $\scrc$  intersects the line at infinity transversely. Let $\a:\pi_1(X(\scrc))\to \G$ be an admissible homomorphism.

Since $\G$ is infinite, and since $X(\scrc)$ is homotopy equivalent to a 2--complex we get from Lemma \ref{lem:propb2} that
$b_p^{(2)}(X(\scrc),\a)=0$ for $p=0$ and $p>2$. It therefore remains to show that
$b_1^{(2)}(X(\scrc),\a)=0$. The statement on $b_2^{(2)}(X(\scrc),\a)$ then follows immediately from
Lemma \ref{lem:propb2} (3).
We denote the homomorphism $\pi_1( \yc)\to \pi_1(\xc)\xrightarrow{\a}\G$ by $\a$ as well.
By Theorem \ref{theorem:top} (3) and Lemma \ref{lem:b2surj}  it is enough to prove that
$b_1^{(2)}(\yc,\a)=0$.

Let $\B^4\subset \C^2$ be a sufficiently large closed ball, in the
sense that $\inter(\B^4)\sm (\scrc\cap \inter(\B^4))$ is
diffeomorphic to $\C^2\sm \scrc$. Such a ball exists by
\cite[Theorem 1.6.9]{Di92}. Note that in particular all
singularities of $\scrc$ lie in the interior of $\B^4$.
By the homotopy invariance of the $L^2$--Betti numbers we can abuse the notion and we therefore denote $\B^4\cap \xc$ and $\B^4\cap \yc$ by $\xc$ and $\yc$ again.

 Given a point $P=(x_P,y_P)\in \C^2$ and $\eps>0$ we
write $\B^4(P,\eps)=\{ (x,y)\in \C^2\, |\, |x-x_P|^2+|y-y_P|^2 \leq
\eps^2\}$ and $S^3(P,\eps)=\partial \B^4(P,\eps)$. Now let
$\sing(\scrc):=\{P_1,\dots,P_s\}$ denote the set of singularities of
$\scrc$. Then there exist $\eps_1,\dots,\eps_s>0$ such that \bn
\item $\B^4(P_i,\eps_i)$ are pairwise disjoint,
\item $\B^4(P_i,\eps_i)\subset \mbox{int}(\B^4)$,
\item $\B^4(P_i,\eps_i)\sm \big( \scrc \cap \B^4(P_i,\eps_i)\big)$ is the cone on $S^3(P_i,\eps_i)\sm \big(\scrc\cap S^3(P_i,\eps_i)\big)$.
\en
Such $\eps_i$ exist by Thom's first isotopy lemma (cf. \cite[Section~5]{Di92} for details).
For $i=1,\dots,s$ we write $S_i^3=\partial(\B^4(P_i,\eps_i))$, $L_i:=S_i^3\cap \scrc$ and
 $X(L_i):=S^3_i\sm \nu L_i$.

Let $T_i, i=1,\dots,s$ be the boundaries of $S^3_i\sm \nu L_i$.
These are unions of tori and we denote the connected components of
$T_i$ by $T_i^1,\dots,T_i^{n_i}$.
Let $F_j:=\scrc_j\sm \big(\cup \inter(\B_i^4) \cap \scrc_j\big)$ for $j=1,\dots,r$. Then
$F_1,\dots,F_r$ are the connected components of $F:=\scrc \cap (
\C^2\sm \cup_{i=1}^s \inter(\B_i^4))$. We write $Y(F)=\yc \cap (\C^2\sm
\cup_{i=1}^s \inter(\B_i^4))$ and we denote the connected components of $Y(F)$ by $Y(F_1),\dots,Y(F_r)$. We can therefore
decompose
\[ \yc=\bigcup_{i=1,\dots,r} Y(F_i)\cup_{T_1\cup \dots \cup T_s} \bigcup_{i=1,\dots,s}
X(L_i) .\]

We need the following definition.

\begin{definition}
Let $M$ be a 3--manifold and $\psi\in H^1(M;\Z)$. We say that $(M,\psi)$ fibers
over $S^{1}$ if the homotopy class of maps $M \to S^1$ determined by $\psi \in H^{1}(M;\Z) =
[M,S^{1}]$ contains a representative that is a fiber bundle over $S^{1}$.
\end{definition}

Milnor \cite[Theorem~4.8]{Mi68} showed that for $i=1,\dots,s$ the
pair $(X(L_i),\phi_i)$ fibers over $S^1$, where
$\phi_i:H_1(X(L_i);\Z)\to \Z$ is induced by the (local) total
linking number homomorphism, i.e., by sending all meridians (with
the induced orientation) about the components of $L_i$ to $1$ (e.g.,
see \cite{Di92}, p.~76--77).
Note that $\phi_i$ is precisely the homomorphism given by
homomorphism
\[ \pi_1(X(L_i))\to \pi_1(\yc)\to \pi_1(\xc)\xrightarrow{\phi} \Z.\]

For $i=1,\dots,r$ we now consider $Y(F_i)$. Picking a trivialization of the normal bundle of $F_i$ we can identify $Y(F_i)$ with $F_i\times S^1$. Consider
the homomorphism
\[ \psi_i:\pi_1(F_i\times S^1)\to \pi_1(\yc)\to \pi_1(\xc)\xrightarrow{\phi} \Z.\]
Since the homomorphism $\pi_1(S^1)\to \pi_1(F_i\times
S^1)\xrightarrow{\psi_i}\Z$ is surjective
it is well--known that $(F_i\times
S^1,\psi_i)$ fibers over $S^1$ and the fiber is diffeomorphic to
$F_i$.
It follows from the above discussion that the fibrations $F_i\times
S^1\to S^1$ and $X(L_i)\to S^1$ when restricted to the tori $T_i^j$
correspond to the same classes in $H^1(T_i^j;\Z)$. Since fibrations of
a torus which lie in the same cohomology class are isotopic it
follows that we can glue the fibrations $F_i\times S^1\to S^1$ and
$X(L_i)\to S^1$ to get a fibration $\pi:\yc\to S^1$ such that $\pi_*:\pi_1(\yc)\to \pi_1(S^1)=\Z$ equals $\pi_1(\yc)\to \pi_1(\xc)\xrightarrow{\phi}\Z$.

We now recall the following theorem (\cite[Theorem~1.39]{Lu02}).

\begin{theorem} Let $M$ be a compact 3--manifold and $\psi\in H^1(M;\Z)$ such that $(M,\psi)$ fibers over $S^1$.
If $\b:\pi_1(M)\to G$ is a homomorphism to a group $G$ such that $\psi$ factors through $\b$, then
$ b_p^{(2)}(M,\b)=0$
for all $p$.
\end{theorem}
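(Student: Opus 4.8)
The plan is to reduce to the mapping-torus picture and then kill all of the homology by writing the relevant $\mathcal{U}(G)$--chain complex as the mapping cone of an operator that the infinite-cyclic direction forces to be invertible. First I would replace $G$ by $\im(\beta)$; by Lemma \ref{lem:propb2}(4) this changes none of the numbers $b_p^{(2)}(M,\beta)$, so I may assume $\beta$ is onto and that the factorization of $\psi$ is a surjection $\bar\psi\colon G\to\Z$ with $\bar\psi\circ\beta=\psi$. Write $\widetilde G=\ker(\bar\psi)$, so that $1\to\widetilde G\to G\to\Z\to1$ is exact. Since $(M,\psi)$ fibers, $M$ is the mapping torus $T_h$ of the monodromy $h\colon F\to F$ of a compact fiber surface $F$, and the fibration gives a short exact sequence $1\to\pi_1(F)\to\pi_1(M)\to\Z\to1$ whose quotient map is $\psi$. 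Let $t\in G$ be the $\beta$--image of the oriented $S^1$--loop; then $\bar\psi(t)=1$, and since $\pi_1(F)=\ker(\psi)$ we get $\bar\psi(\beta(\pi_1 F))=\psi(\pi_1 F)=0$, so $\beta(\pi_1 F)\subseteq\widetilde G$.

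Second, I would exploit the cellular structure of the mapping torus. Choosing a finite CW structure on $F$ and the product structure on $T_h$, the $\mathcal{U}(G)$--chain complex computing $b_*^{(2)}(M,\beta)$ is the algebraic mapping cone
\[ C_*^{(2)}(M;\mathcal{U}(G))=\mbox{Cone}\big(\,\mbox{id}-\Phi\colon C_*^{(2)}(F;\mathcal{U}(G))\to C_*^{(2)}(F;\mathcal{U}(G))\,\big),\]
where $\Phi=t\cdot h_\#$ is left multiplication by $t$ composed with the chain map induced by a lift of the monodromy; the associated long exact sequence is the Wang sequence. Here $C_*^{(2)}(F;\mathcal{U}(G))$ is a finite free $\mathcal{U}(G)$--complex because $F$ is finite. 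The crucial structural point is that, for the $\Z$--grading of $\mathcal{N}(G)$ induced by $\bar\psi$, the matrix of $h_\#$ has entries of degree $0$ (they lie in $\Z[\beta(\pi_1 F)]\subset\mathcal{N}(\widetilde G)$) while $t$ has degree $1$; hence $\Phi$ raises the grading by exactly one and $\mbox{id}-\Phi$ is ``identity minus a degree--one operator.''

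Third --- the heart of the matter --- I would prove that $\mbox{id}-\Phi$ is $\dim_{\mathcal{U}(G)}$--injective, i.e.\ its kernel has von Neumann dimension $0$. Granting this, additivity of $\dim_{\mathcal{U}(G)}$ together with von Neumann regularity of $\mathcal{U}(G)$ forces the cokernel to vanish, so $\mbox{id}-\Phi$ is an isomorphism in every degree; being a chain map it is then a chain isomorphism, its mapping cone is contractible, and $H_p(C_*^{(2)}(M;\mathcal{U}(G)))=0$ for all $p$, whence $b_p^{(2)}(M,\beta)=0$ for all $p$. The model is $G=\Z$, $F=\mbox{pt}$: then $\mathcal{U}(\Z)$ is the algebra of measurable functions on $S^1$, $t$ is the coordinate $z$, and $\mbox{id}-\Phi=1-z$ vanishes only on the null set $\{z=1\}$ and is therefore invertible --- this is exactly the vanishing $b_*^{(2)}(S^1)=0$. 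In general I would use the $\Z$--grading coming from $1\to\widetilde G\to G\to\Z\to1$: a kernel element, expanded as $\sum_n\xi_n$ with $\xi_n$ in grading $n$, satisfies a one--sided recursion relating $\xi_{n+1}$ to $\xi_n$ through the degree--zero matrix of $h_\#$, which is incompatible with $\ell^2$--summability unless $\xi=0$.

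The main obstacle is precisely this $\dim$--injectivity of $\mbox{id}-\Phi$ over $\mathcal{U}(G)$ for a general $G$: since $G$ need not be LITFA, one cannot pass to a skew field $\K(G)$ and argue by invertibility of a nonzero ``determinant,'' so one is thrown back on the affiliated algebra and the grading estimate above. I would organize the proof by isolating the purely one--dimensional input --- that a degree--one perturbation of the identity has $\dim$--trivial kernel over $\mathcal{U}(G)$ --- and recognizing it as the $L^2$ shadow of $b_*^{(2)}(S^1)=0$. This is exactly where the fibration hypothesis is used: the assumption that $\psi$ factors through $\beta$ is what yields $\bar\psi(t)=1$ and hence that $\Phi$ genuinely shifts the grading. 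If one drops it --- for instance taking $\beta$ trivial --- the invariants become the ordinary Betti numbers of $M$, which need not vanish, so the hypothesis is essential and enters at precisely this step.
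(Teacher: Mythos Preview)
The paper does not prove this statement itself; it is quoted as \cite[Theorem~1.39]{Lu02}. L\"uck's argument there is quite different from yours and considerably shorter. After replacing $G$ by $\beta(\pi_1(M))$ and writing $M=T_h$, one observes that for each $n\ge1$ the subgroup $G_n:=\bar\psi^{-1}(n\Z)\subset G$ has index $n$, and the cover of $T_h$ associated to $\beta^{-1}(G_n)=\psi^{-1}(n\Z)$ is the mapping torus $T_{h^n}$ over the \emph{same} fiber $F$. Restriction to a finite-index subgroup then gives $n\cdot b_p^{(2)}(T_h,\beta)=b_p^{(2)}(T_{h^n},\beta|)\le c_p(F)+c_{p-1}(F)$, and letting $n\to\infty$ finishes. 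The factoring hypothesis is used exactly once, to identify $\beta^{-1}(G_n)$ with $\psi^{-1}(n\Z)$ so that the relevant cover is again a mapping torus with fiber $F$.

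Your mapping-cone/Wang-sequence setup is correct and is the other natural route, but there is a genuine gap at the step you yourself flag as ``the main obstacle'': the $\dim$-injectivity of $\mathrm{id}-\Phi$. The grading heuristic you propose (``a one-sided recursion incompatible with $\ell^2$-summability'') does not work as stated: writing $\xi=\sum_n t^n\eta_n$ with $\eta_n\in\ell^2(\widetilde G)^k$, the recursion $\eta_{m}=A_{m-1}\eta_{m-1}$ (with $A_m:=t^{-m}At^m$) only yields $\|\eta_n\|\le\|A\|^{\,n-m}\|\eta_m\|$ for $n\ge m$, and when $\|A\|\ge1$---as is typical for the chain-level monodromy of a surface automorphism---this is no obstruction whatsoever to square-summability. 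Appealing to ``the $L^2$ shadow of $b_*^{(2)}(S^1)=0$'' is circular, since that is precisely the $k=1$, $A=1$ instance of what you are trying to prove. The weak-isomorphism statement is true, but its proof requires real input; in effect L\"uck's covering trick \emph{is} the missing ingredient, since the vanishing of all $b_p^{(2)}(T_h,\beta)$ is equivalent, via your own mapping-cone description, to $\mathrm{id}-\Phi$ being a $\dim_{\mathcal{U}(G)}$-isomorphism.
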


Since $\a$ is admissible it follows now that $b_p^{(2)}(\yc,\a)=0$.
This concludes the proof of Theorem \ref{mainthm}.

\subsection{Proof of Theorem \ref{mainthm2}}
Let $\scrc\subset \C^2$ be a reduced algebraic curve in general
position at infinity. We pick $\B^4$ as in the previous section, again we abuse the notation and we denote $\B^4\cap \xc$ and $\B^4\cap \yc$ by $\xc$ and $\yc$.

Let $\a:\pi_1(X(\scrc))\to \G$ be an
admissible homomorphism. Denote the induced map $\pi_1(\yc)\to
\pi_1(\xc)\xrightarrow{\phi} \Z$ by $\phi'$. Note that by Theorem \ref{theorem:top} (3)  the map $\phi'$ is
surjective as well. Now denote by $\tibxc$ and $\tixc$ the  infinite
cyclic covers corresponding to $\phi'$ and $\phi$. It follows easily
that the induced map
\[ \pi_1(\tibxc)\to \pi_1(\tixc) \]
is still surjective.
But by   Lemma \ref{lem:b2surj}  we then also have
\[ b_1^{(2)}(\tibxc,\ti{\a}) \geq b_1^{(2)}(\tixc,\ti{\a}).\]
As we saw above,  $(\yc,\phi)$ fibers over $S^1$. It follows that
$\tibxc$ is homotopy equivalent to the fiber $\S$ of the fibration
and we see that $b_1^{(2)}(\tibxc,\ti{\a})=b_1^{(2)}(\S,\ti{\a})$.
Since $\S$ is a compact surface with boundary it follows immediately from Lemma \ref{lem:propb2} that
\[ b_1^{(2)}(\S,\ti{\a})=-\chi(\S)+b_0^{(2)}(\S,\ti{\a})\leq -\chi(\S)+1.\]
It therefore remains to compute $\chi(\S)$.

We denote the fibers of the fibrations $X(L_i)\to S^1$ by $\S_i$
and we denote the fibers of the fibrations $X(F_i)\to S^1$ by $F_i'$. Recall that $F_i'$ is diffeomorphic to $F_i$.
Note that $\S$ is the result of gluing the set of fibers $\{\S_i\}$
and the surfaces $\{F_i'\}$ along the longitudes of the links $L_i$.
Since the Euler
characteristic of the longitudes are zero  we get
\[ \chi(\S)=\sum_{i=1}^s \chi(\S_i)+\sum_{i=1}^r \chi(F_i).\]
By \cite[p.~78]{Di92} we have $\chi(\S_i)=1-\mu(\scrc,P_i)$ where $\mu(\scrc,P_i)$ denotes the Milnor number of the singularity $P_i$.

Now let $\scrd$ be the projective completion of $\scrc$.
Topologically $\scrd$ is given by adding disks to the boundary
components of $\scrc$  at ``infinity". Since $\scrc$ has degree $d$
and is in general position at infinity, there are exactly $d$ such
components. Since gluing in a disk increases the Euler
characteristic by $1$ we get that
\[
 \chi(\scrd)=\chi(\scrc)+ d.
 \]
Recall that the normalization of $\scrd$ is defined to be the curve $\hat{\scrd}$ without singularities obtained from
$\scrd$ by blow--ups.  Note that $\chi(\hat{\scrd})$ can be computed as follows: Let
$\scrd'$ be the result of first removing balls around the singularities, and let
$\scrd''$ be the result of gluing in disks to all the boundary components of
$\scrd'$. Then $\scrd''$ is topologically equivalent to $\scrd$ blown up at
the singularities, in particular
\[ \chi(\hat{\scrd})=\chi(\scrd'').\]
Since gluing in a disk increases the Euler characteristic by $1$ we
also get that
\[
\chi(\hat{\scrd})=\chi(\scrd')+ b_0(\partial \scrd').
\]
In our situation we therefore get
\[ \chi(\hat{\scrd})=\sum_{i=1}^r\chi(F_i)+\sum_{i=1}^sn_i+d.
\]
Summarizing we therefore see that
\[ \ba{rcl} b_1^{(2)}(\tixc)&\leq& b_1^{(2)}(\tibxc)\\
&\leq &-\chi(\S)+1\\
&=&-\sum_{i=1}^s \chi(\S_i)-\sum_{i=1}^r \chi(F_i)+1\\
&=&\sum_{i=1}^s \left(\mu(\scrc,P_i)-1\right)-\chi(\hat{\scrd})+ \sum_{i=1}^sn_i+d+1\\
&\leq&\sum_{i=1}^s
\left(\mu(\scrc,P_i)+n_i-1\right)+2g(\hat{\scrd})+d.\ea
\]
This completes  the proof of Theorem \ref{mainthm2}. We conclude with two remarks.

\begin{remark}
\bn
\item
In the case that $\G$ is a LITFA group we saw in Proposition
\ref{prop:b2kg} that the $L^2$--Betti numbers are determined by
ranks of homology modules over skew fields. In that case the
flatness of certain rings involved shows that statement of Theorem
\ref{mainthm} is an immediate consequence of Theorem \ref{mainthm2}
(we refer to  \cite{LM06} for details). This approach does not seem
to work if $\G$ is not a LITFA group.
\item Our methods carry over to prove generalizations of
Theorem 4.5, Theorem 4.7 and Corollary 4.8 in \cite{LM06}. We leave
the task of formulating and proving the precise statements to the
reader.
\item Given a knot $K$ we denote by $X(K)=S^3\sm \nu K$ its exterior, and by $\widetilde{X(K)}$ the infinite cyclic cover of $X(K)$. In the case that $K$ is a non--trivial fibered knot it follows from the above that
    $b_1^{(2)}\big(\widetilde{X(K)},\id\big)=2\mbox{genus}(K)-1$. Given any non--trivial knot $K$ we write $\ti{\pi}=\pi_1\big(\widetilde{X(K)}\big)$. By Proposition \ref{prop:b2kg} the sequence of $L^2$--Betti numbers $b_1^{(2)}\big(\widetilde{X(K)},\ti{\pi}\to \ti{\pi}/\ti{\pi}^{(n)}\big), n\geq 1$ equals the
    sequence of Cochran invariants $\delta_n(K)$, which was shown in \cite{Co04} to be a never--decreasing
    sequence of invariants which all give lower bounds on $2\,\mbox{genus}(K)-1$.
    Cochran's result can be interpreted as saying that the $L^2$--Betti number corresponding to `bigger' (PTFA--) quotients of $\ti{\pi}$ give better bounds on $2\,\mbox{genus}(K)-1$.
    It therefore seems natural to us to  conjecture that `in the limit' we get an equality, i.e. that $b_1^{(2)}\big(\widetilde{X(K)},\id\big)=2\,\mbox{genus}(K)-1$.
\en
\end{remark}

\end{document}